\documentclass[12pt, a4paper]{amsart}
\usepackage{amsmath,amssymb,amscd,amsfonts}
\newtheorem{theorem}{Theorem}[section]
\newtheorem{rem}[theorem]{Remark}
\newtheorem{defn}[theorem]{Definition}

\newtheorem{corollary}[theorem]{Corollary}

\def\dbar{\overline\partial}
\def\ddbar{\partial\overline\partial}

\let\ol=\overline
\let\wt=\widetilde
\let\wh=\widehat
\def\bQ{{\mathbb Q}}

\def\bR{{\mathbb R}}
\def\bZ{{\mathbb Z}}
\def\bD{{\mathbb D}}

\begin{document}
\title[Relative]{
Relative adjoint transcendental classes\\
and Albanese maps of \\
compact K\"ahler manifolds with nef ricci curvature}

\author{Mihai P\u aun}

\address{Korea Institute for Advanced Study \\
School of Mathematics\\
85 Hoegiro, Dongdaemun-gu, Seoul 130-722, Korea.}
\email{paun@kias.re.kr}

\maketitle

\section{Introduction}

Let $p:X\to Y$ be a holomorphic surjective map, where $X$ and $Y$ are 
compact K\"ahler manifolds. We denote by $W\subset Y$ an
analytic set containing the singular values of $p$, and
let $X_0:= p^{-1}(Y\setminus W)$. 
Let $\{\beta\}\in H^{1,1}(X, \bR)$ 
be a real cohomology class of $(1,1)$-type, which contains a 
non-singular, semi-positive definite representative $\beta$. 

Our primary goal in this note is to 
investigate the positivity properties of the class
$$c_1(K_{X/Y})+ \{\beta\},$$
which are inherited from similar  \emph{fiberwise}
properties. 

\medskip

\noindent In this perspective, the main statement we obtain here is as follows.

\begin{theorem} Let $p: X\to Y$ be a surjective map. We consider  
a semi-positive class $\{\beta\}\in H^{1,1}(X, \bR)$, such that the adjoint class $c_1(K_{X_y})+ \{\beta\}|_{X_y}$
is K\"ahler for
any $y\in Y\setminus W$.
Then the relative 
adjoint class
$$c_1(K_{X/Y})+ \{\beta\}$$
contains a closed positive current $\Theta$, which equals a (non-singular)
semi-positive definite form on $X_0$.

\end{theorem}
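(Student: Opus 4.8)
The plan is to realize $\Theta$ as (a twist of) the curvature of the metric on $K_{X/Y}$ induced by a family of fibrewise twisted K\"ahler--Einstein metrics, and to establish its semi-positivity by a maximum principle along the fibres, in the spirit of Schumacher's and P\u aun--Takayama's analysis of families of canonically polarized manifolds. Fix $y\in Y\setminus W$ and set $n:=\dim X_y$. The class $\alpha_y:=c_1(K_{X_y})+\{\beta\}|_{X_y}$ is K\"ahler by hypothesis, so choosing a K\"ahler form $\omega_0\in\alpha_y$ and $F\in C^\infty(X_y)$ with $i\ddbar F=\mathrm{Ric}(\omega_0)+\omega_0-\beta|_{X_y}$ (the right--hand side being $\ddbar$--exact on the compact K\"ahler manifold $X_y$), the twisted K\"ahler--Einstein equation
\[
\mathrm{Ric}(\omega_y)=-\omega_y+\beta|_{X_y},\qquad \omega_y\in\alpha_y,
\]
becomes equivalent to the Aubin--Yau Monge--Amp\`ere equation $(\omega_0+i\ddbar u)^n=e^{u+F}\omega_0^n$, which admits a unique smooth solution with $\omega_y:=\omega_0+i\ddbar u>0$; note that no sign hypothesis on $\beta$ intervenes at this stage. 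By uniqueness together with the invertibility of the linearized operator on the compact fibre, $\omega_y$ depends smoothly on $y\in Y\setminus W$, so the volume forms $\omega_y^n$ glue into a smooth Hermitian metric $h$ on $K_{X/Y}|_{X_0}$; write $\Theta_h$ for its Chern curvature. Set $\Theta:=\Theta_h+\beta$ on $X_0$. It is smooth there; since $\Theta_h$ restricts on each fibre to the curvature of the metric on $K_{X_y}$ induced by $\omega_y^n$, namely to $-\mathrm{Ric}(\omega_y)=\omega_y-\beta|_{X_y}$, we get $\Theta|_{X_y}=\Theta_h|_{X_y}+\beta|_{X_y}=\omega_y>0$; and once $\Theta$ has been extended across $X\setminus X_0$ its cohomology class is $c_1(K_{X/Y})+\{\beta\}$. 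Thus $\Theta$ is already strictly positive along the fibres, and everything reduces to showing that $\Theta\ge0$ in the mixed and horizontal directions on $X_0$.

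This positivity is the heart of the matter, and the step I expect to be the main obstacle. Using $\omega_y$ to split the tangent bundle of $X_0$ into the vertical subbundle and its $\omega_y$--orthogonal complement, $\Theta\ge0$ is equivalent to the non-negativity of the fibrewise \emph{geodesic curvature} $c$ of the family, the scalar measuring the sign of $\Theta$ in a horizontal direction once the fibre part of $\Theta$ has been used up. Differentiating the fibrewise Monge--Amp\`ere equation in the base parameter produces, along each fibre, a linear second-order elliptic equation of the schematic form
\[
(\Box_{X_y}+1)\,c\;=\;|\dbar V|^2_{\omega_y}\;+\;T_\beta\;\ge\;0,
\]
where $\Box_{X_y}\ge0$ is the Laplacian of $\omega_y$ on the fibre, $V$ is the horizontal lift of the chosen base direction, and $T_\beta\ge0$ is the contribution of the twist, whose sign is forced by $\beta\ge0$. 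At a point of $X_y$ where $c$ is minimal one has $\Box_{X_y}c\le0$, hence $0\le(\Box_{X_y}+1)c\le c$ there; as $X_y$ is compact this gives $c\ge0$ on $X_y$, and therefore $\Theta\ge0$ on all of $X_0$. (When $c_1(K_{X_y})$ is itself K\"ahler one may alternatively take $\omega_y$ to be the genuine fibrewise K\"ahler--Einstein metric, invoke Schumacher's semi-positivity, and add $\beta\ge0$ afterwards; under the weaker hypothesis of the theorem the twisted equation seems unavoidable.) The delicate points are establishing this curvature identity and verifying that $T_\beta$ carries the correct sign.

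It remains to extend $\Theta$ across $X\setminus X_0=p^{-1}(W)$ as a closed positive current. Fix a smooth positive relative volume form $dV$ on $X$ and let $\Theta_0\in c_1(K_{X/Y})$ be the curvature of the induced metric on $K_{X/Y}$; writing $\varphi:=\log(\omega_y^n/dV)$ on $X_0$ we have $\Theta=(\Theta_0+\beta)+i\ddbar\varphi$ there, with $\Theta_0+\beta$ smooth on $X$, so by the previous step $\varphi$ is $(\Theta_0+\beta)$--plurisubharmonic on $X_0$; one must rule out $\varphi\to+\infty$ along $X\setminus X_0$. The fibrewise masses $\int_{X_y}\omega_y^n=(c_1(K_{X_y})+\{\beta\}|_{X_y})^n$ depend polynomially on the restrictions of the fixed classes $c_1(K_{X/Y})$ and $\{\beta\}$ to the fibres, hence stay bounded as $y\to W$; inserting this into a uniform a priori estimate of Ko\l{}odziej type for Monge--Amp\`ere equations in families yields a local upper bound for $\varphi$ near $X\setminus X_0$. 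Since $X\setminus X_0$ is a proper analytic subset of $X$, the upper semicontinuous regularization of $\varphi$ then extends to a $(\Theta_0+\beta)$--plurisubharmonic function on all of $X$, and $\Theta:=(\Theta_0+\beta)+i\ddbar\varphi$ is the closed positive current in $c_1(K_{X/Y})+\{\beta\}$ asserted by the theorem, coinciding on $X_0$ with the smooth semi-positive form constructed above.
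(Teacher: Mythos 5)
Your construction of the fibrewise twisted K\"ahler--Einstein metrics, the resulting Hermitian metric on $K_{X/Y}|_{X_0}$, and the elliptic equation $(\Box_{X_y}+1)\,c=|\dbar V|^2+\beta(V,\bar V)$ with the ensuing maximum-principle argument for $c\ge 0$ all line up with the paper's Sections 3.1--3.2 (the paper derives the same identity, formula (34), and then applies a variant of Schumacher's estimate instead of the pointwise maximum principle you use, which is a harmless difference since only semi-positivity is claimed).

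The genuine gap is in the extension of $\Theta$ across $p^{-1}(W)$. You reduce the problem to a local upper bound for $\varphi=\log(\omega_y^n/dV)$ near $X\setminus X_0$ and then invoke ``a uniform a priori estimate of Ko\l odziej type for Monge--Amp\`ere equations in families.'' That is exactly the step the paper flags as the real difficulty: any estimate that goes through fibrewise Monge--Amp\`ere theory or through the mean-value inequality carries constants that depend on the geometry of $X_y$ --- diameter, Sobolev constant, $\alpha$-invariant, the positive lower bound of the background form restricted to the fibre --- and these quantities are not controlled as $y$ tends to the singular locus $W$. The boundedness of the fibrewise masses $\int_{X_y}\omega_y^n$ is correct and necessary, but by itself it does not give a uniform $C^0$ bound on the potential as the fibres degenerate; this is not a routine invocation of Ko\l odziej, and you do not supply the argument. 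The paper circumvents this precisely by \emph{not} estimating on the degenerating fibre: it approximates the fibrewise weight $\tau_y$ by $\frac1m\log|f|^2$ with $f$ holomorphic on $\Omega_y$ (Demailly's approximation theorem), rewrites the $L^{2/m}$ norm using the Monge--Amp\`ere equation itself, extends $f$ to the fixed ambient chart $\Omega\subset X$ via the $L^{2/m}$ Ohsawa--Takegoshi theorem of Berndtsson--P\u aun with a constant independent of $m$ and $y$, and then applies Cauchy's inequality on the \emph{fixed} compact $X$. That mechanism transfers the estimate to a space whose geometry is uniform, which is the whole point. To make your proposal complete you would need either to reproduce such a transfer argument or to prove a genuinely uniform-in-$y$ $C^0$ estimate for the family of twisted Monge--Amp\`ere equations, neither of which is present.
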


\medskip
 
\noindent As a consequence of the proof of the previous result, the current $\Theta$ will be greater than a K\"ahler metric when restricted to any relatively compact open subset of $X_0$, provided that $\beta$ is a K\"ahler metric. Also, if $\beta\geq p^\star(\gamma)$ for some (1,1)-form $\gamma$ on $Y$, then we have
$$\Theta\geq p^\star(\gamma).$$

We remark that 
if the class $\beta$ is the first Chern class of a holomorphic $\bQ$-line bundle $L$,
that is to say, if
$$\{\beta\}\in H^{1,1}(X, \bR)\cap H^2(X, \bQ),$$
then there are many results concerning the positivity of the twisted relative 
canonical bundle, cf. \cite{bo3}, \cite{bp1}, \cite{bp3}, \cite{campana}, \cite{fujita}, \cite{griff}, \cite{hoer}, \cite{kaw1}, 
\cite{kaw2}, \cite{kaw3}, \cite{janos1}, \cite{janos2}, \cite{mou}, 
\cite{schumi1}, \cite{schumi2},
\cite{eckart1}, \cite{eckart2}, \cite{eckart3} to quote only a few.

The references \cite{schumi1}, \cite{siu1} are particularly important for us; indeed, a large part of the arguments 
presented by G. Schumacher in \cite{schumi1}, \cite{schumi2} will be used in our proof, as they rely on the complex Monge-Amp\`ere equation as \emph{substitute} for the
theory of linear bundles used in the other works quoted above
(see section 3.2 of this paper).

\medskip 

\noindent Before stating a few consequences of our main result,
we recall the following metric version of the usual notion of 
\emph{nef} line bundle in algebraic geometry, as it was introduced in \cite{demailly1}.

\begin{defn} Let $(X, \omega)$ be a compact complex manifold
endowed with a hermitian metric, and let 
$\{\rho\}$ be a real (1,1) class on $X$. We say that $\{\rho\}$
is nef (in metric sense) if for every $\varepsilon> 0$ there exists a function 
$f_\varepsilon\in {\mathcal C}^\infty(X)$ such that 
\begin{equation}\label{1}\rho+ \sqrt{-1}\ddbar f_\varepsilon\geq -\varepsilon \omega.
\end{equation}
\end{defn}

Thus the class $\{\rho\}$ is nef if it admits non-singular representatives with
arbitrary small negative part. It was established in \cite{demailly2} that if $X$ is projective and if
$\{\rho\}$ is the first Chern class of a line bundle $L$, then 
$L$ is nef in the algebro-geometric sense if and only if
$L$ is nef in metric sense.
\smallskip

\noindent Let ${\mathcal X}\to \bD$ be a non-singular K\"ahler family over the
unit disk. Then we have the following (direct) consequence of Theorem 1.1.

\begin{corollary} We assume that the bundle $\displaystyle K_{{\mathcal X}_t}$ is nef,
for any $t\in \bD$. Then $\displaystyle K_{{\mathcal X}/\bD}$ is nef.

\end{corollary}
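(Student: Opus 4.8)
The plan is to run Theorem 1.1 along a family of vanishing K\"ahler twists. Fix a K\"ahler metric $\omega$ on $\mathcal{X}$, and for $\varepsilon>0$ consider the semi-positive class $\{\beta_\varepsilon\}:=\varepsilon\{\omega\}$, which is admissible in Theorem 1.1 since $\varepsilon\omega$ is a non-singular (in fact strictly) positive representative. As $p:\mathcal{X}\to\bD$ is a submersion it has no singular values, so we take $W=\emptyset$ and $X_0=\mathcal{X}$. The one hypothesis to check is that for every $t\in\bD$ the fibrewise adjoint class $c_1(K_{\mathcal{X}_t})+\{\beta_\varepsilon\}|_{\mathcal{X}_t}$ is K\"ahler on the compact K\"ahler manifold $\mathcal{X}_t$. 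This is immediate from the nefness of $K_{\mathcal{X}_t}$: Definition 1.3 on $\mathcal{X}_t$ with parameter $\varepsilon/2$ provides a smooth $\theta_t\in c_1(K_{\mathcal{X}_t})$ with $\theta_t\geq-\tfrac{\varepsilon}{2}\,\omega|_{\mathcal{X}_t}$, hence $\theta_t+\varepsilon\,\omega|_{\mathcal{X}_t}\geq\tfrac{\varepsilon}{2}\,\omega|_{\mathcal{X}_t}>0$, so the twisted class contains a K\"ahler form.

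Granting this, Theorem 1.1 produces a closed positive current on $\mathcal{X}$ which, as $X_0=\mathcal{X}$, is an honest smooth semi-positive $(1,1)$-form $\Theta_\varepsilon$ representing $c_1(K_{\mathcal{X}/\bD})+\varepsilon\{\omega\}$ (the remark after Theorem 1.1, applied with $\gamma=0$ so that $\beta_\varepsilon=\varepsilon\omega\geq p^\star(0)$, re-confirms $\Theta_\varepsilon\geq0$). Then $\theta_\varepsilon:=\Theta_\varepsilon-\varepsilon\omega$ is a smooth closed real $(1,1)$-form in the class $c_1(K_{\mathcal{X}/\bD})$ with $\theta_\varepsilon\geq-\varepsilon\omega$; since any two smooth hermitian metrics on the line bundle $K_{\mathcal{X}/\bD}$ differ by $\sqrt{-1}\ddbar$ of a smooth function, this is exactly condition \eqref{1} for the parameter $\varepsilon$. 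Letting $\varepsilon\downarrow0$, we conclude that $K_{\mathcal{X}/\bD}$ is nef in the sense of Definition 1.3.

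The delicate point is that $\bD$, hence $\mathcal{X}$, is not compact, whereas Theorem 1.1 is stated for compact $X$ and $Y$. I would handle this by observing that the form produced in Theorem 1.1 is \emph{local over the base}: following the scheme recalled in section 3.2, one solves along each fibre a complex Monge--Amp\`ere equation intrinsic to that fibre, assembles the solutions into a metric on $K_{\mathcal{X}/\bD}$, and invokes Schumacher's fibrewise curvature computation to see that the curvature is semi-positive. This uses only the K\"ahler structure of the base and the properness of $p$, never the compactness of $Y$, so it applies verbatim over $\bD$; alternatively, to stay strictly inside the statement of Theorem 1.1, one runs it over each relatively compact subdisk $\bD_r$ and glues the resulting (compatible) forms as $r\uparrow1$. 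Verifying that the construction is genuinely intrinsic to the fibres — in particular that its horizontal part does not secretly require $Y$ to be compact — is the main thing to be careful about; granting it, the corollary is nothing but the substitution $\beta=\varepsilon\omega$ in Theorem 1.1.
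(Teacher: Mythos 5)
The proposal is correct and follows essentially the same path as the paper: substitute $\beta=\varepsilon\omega$ for a K\"ahler metric $\omega$, use nefness of $K_{\mathcal{X}_t}$ to verify that the fibrewise adjoint class is K\"ahler, invoke the fibrewise K\"ahler--Einstein construction to obtain a semi-positive representative of $c_1(K_{\mathcal{X}/\bD})+\varepsilon\{\omega\}$, and let $\varepsilon\downarrow 0$. Your caution about the non-compact base is exactly right, and is resolved as you suggest --- the paper cites the computation of section 3.2 directly (which only uses properness of $p$, not compactness of $Y$) rather than the literal statement of Theorem~1.1.
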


\smallskip

\noindent We remark that in the context of the previous corollary, much more is 
expected to be true. For example, if the K\"ahler version of the 
\emph{invariance of plurigenera} turns out to be true, then it 
would be enough to assume in Corollary~1.3
that $\displaystyle K_{{\mathcal X}_0}$ is pseudo-effective 
in order to derive the conclusion that $\displaystyle K_{{\mathcal X}/\bD}$ is pseudo-effective.

\medskip
The second application of Theorem 1.1 concerns the 
Albanese morphism associated to a compact K\"ahler manifold $X$.
We denote by 
$q:= h^0(X, {\mathcal O}_X)$ the irregularity of $X$, and let 
$${\rm Alb}(X):= H^0(X, T^\star_X)^\star/H_1(X, \bZ)$$
be the Albanese torus of associated to $X$. We recall that the Albanese map
$\alpha_X: X\to {\rm Alb}(X)$ is defined as follows
$$\alpha_X(p)(\gamma):= \int_{p_0}^p\gamma$$
modulo the group $H_1(X, \bZ)$, i.e. modulo the integral of 
$\gamma$ along loops at $p_0$. 

We assume that $-K_X$ is nef, in the sense of the definition above.
It was conjectured by J.-P. Demailly, Th. Peternell and M. Schneider 
in \cite{demailly3} that $\alpha_X$ is surjective; some particular cases
of this problem are established in \cite{demailly3}, \cite{mp1},
\cite{capet}. If $X$ is assumed to be 
\emph{projective}, then the surjectivity of the Albanese map
was established by Q. Zhang in \cite{zhang}, by using in an essential manner
the \emph{char p} methods. More recently, in the article \cite{zhang1}, the same author
provides an alternative proof of this result, based on the semi-positivity of direct images.

\medskip

\noindent We settle here the conjecture in full generality.

\begin{theorem} Let $X$ be a compact K\"ahler manifold such that 
$-K_X$ is nef. Then its Albanese morphism $\alpha_X: X\to {\rm Alb}(X)$
is surjective.

\end{theorem}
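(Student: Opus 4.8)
The plan is to reduce the surjectivity statement to Theorem~1.1 by a contradiction argument combined with a study of the Albanese map restricted to general fibers. Suppose $\alpha_X$ is \emph{not} surjective. Then $Z:=\alpha_X(X)$ is a proper analytic subvariety of $\mathrm{Alb}(X)$; after passing to the Stein factorization and replacing $X$ by a resolution, I may assume $p:X\to Y$ is a surjective map with connected fibers onto a \emph{lower-dimensional} compact K\"ahler manifold $Y$, and that $\alpha_X$ factors through $p$. By a classical result of Ueno/Kawamata on subvarieties of abelian varieties, $Y$ is (birational to, hence after further blow-up) a smooth fibration $Y\to A'$ over a positive-dimensional abelian variety $A'$ with fibers of general type (or $Y$ itself has positive Kodaira dimension after the Ueno fibration step). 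The key point I want to extract is that the base $Y$ of $p$ carries a non-zero pseudo-effective, actually ``big on a component'' canonical-type class — concretely, $p$ is a nontrivial fibration and the general fiber $X_y$ has $-K_{X_y}$ still nef (restriction of a nef class is nef).

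The heart of the argument is then the following dichotomy applied to $p:X\to Y$. First I would show that for general $y$, the fiber $X_y$ is a compact K\"ahler manifold with $-K_{X_y}$ nef, and by induction on dimension its Albanese $\mathrm{Alb}(X_y)\to \mathrm{Alb}(X)$ — more precisely the image of $H_1(X_y)\to H_1(X)$ — controls the discrepancy: the variation of the Albanese image of the fibers is exactly what forces $\alpha_X(X)$ to be a torus of the ``expected'' dimension unless the fibers are large. Second, and this is where Theorem~1.1 enters, I apply it with $\{\beta\}=0$: if $K_{X_y}$ were K\"ahler (or even if $c_1(K_{X_y})$ were a K\"ahler class) for general $y$, Theorem~1.1 produces a closed positive current $\Theta\in c_1(K_{X/Y})$ that is a smooth semi-positive form over $X_0$ and is $\geq p^\star\gamma$ when $\beta\geq p^\star\gamma$. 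Combined with the fact that $c_1(K_Y)$ (pulled back, plus the relative term) would then be pseudo-effective with nonzero volume along the base directions, one gets that $-K_X=-K_{X/Y}-p^\star K_Y$ cannot be nef unless the base directions are trivial — contradicting that $p$ is a nontrivial fibration onto a variety with a nonzero canonical-type class. So the generic fiber cannot be of general type, and iterating (via the Albanese of the fibers and the universal property) collapses the whole picture: $\alpha_X$ must have been surjective.

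Let me restate the mechanism more carefully, since the clean version is: let $A=\mathrm{Alb}(X)$ and suppose $\alpha_X(X)=:Z\subsetneq A$. By Ueno's theorem there is an abelian subvariety $A_1\subset A$ such that $Z$ is fibered over a variety $Z_1$ of general type with fibers translates of $A_1$; pulling back to $X$ and resolving, we get $p:X\to Y$ with $Y$ birational to $Z_1$, hence $Y$ of general type, i.e. $c_1(K_Y)$ is big. Restricting $-K_X$ nef to a general fiber $X_y$ gives $-K_{X_y}$ nef; in particular $K_{X_y}$ is \emph{not} big, so $X_y$ is \emph{not} of general type. Now the subtle point: $Z_1$ being of general type is about $Y$, not about the fibers, so I cannot directly contradict the fiberwise hypothesis of Theorem~1.1 yet. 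Instead I use Theorem~1.1 in the following form — choose a K\"ahler class $\{\beta\}$ on $X$ small enough that $c_1(K_{X_y})+\{\beta\}|_{X_y}$ is K\"ahler for general $y$ (possible since $-K_{X_y}$ nef means $c_1(K_{X_y})$ is a limit of classes $\geq -\varepsilon\omega$, so adding a fixed K\"ahler class makes it K\"ahler) — obtaining a closed positive current $\Theta\in c_1(K_{X/Y})+\{\beta\}$ smooth semi-positive on $X_0$. Then $c_1(K_X)+\{\beta\}=\Theta+p^\star c_1(K_Y)$, and the right side is pseudo-effective \emph{and} strictly positive in the base directions (because $c_1(K_Y)$ is big and $\Theta\geq p^\star\gamma$ for a suitable $\gamma$ pulled back).

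The main obstacle — and the step I would spend the most care on — is extracting a genuine contradiction with the nefness of $-K_X$ from this positivity of $c_1(K_X)+\{\beta\}$: nefness is an \emph{infinitesimal/asymptotic} condition (representatives with $\geq -\varepsilon\omega$), so I cannot naively add ``$-K_X$ nef'' to ``$K_X+\beta$ has a big-in-base-directions current'' and conclude $\beta$ is big in base directions. The correct resolution is to run the argument not with a fixed $\beta$ but to exploit that $\beta$ can be taken arbitrarily small: for each $\varepsilon$, nefness gives a smooth $\theta_\varepsilon\in c_1(-K_X)$ with $\theta_\varepsilon\geq -\varepsilon\omega$; adding $\Theta+p^\star c_1(K_Y)$-type positivity and pushing to $Y$ (integrating over fibers, i.e. taking $p_\star$ of $(\theta_\varepsilon+\Theta)\wedge(\text{fiber K\"ahler form})^{n-\dim Y}$) yields a current on $Y$ in the class $-p^\star K_Y + p_\star(\dots)$ that is both $\geq -\varepsilon\cdot(\text{fixed class})$ and has a negative part controlled by $-c_1(K_Y)$ up to the small $\beta$ contribution; letting $\varepsilon\to 0$ contradicts bigness of $K_Y$ (a big class cannot be anti-nef on a positive-dimensional manifold). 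I expect the technical heart to be making this fiber-integration and limiting argument rigorous in the transcendental K\"ahler category — controlling the positive currents under $p_\star$, ensuring the limit current is well-defined, and handling the singular fibers over $W$ — which is precisely why the smoothness of $\Theta$ on $X_0$ furnished by Theorem~1.1 (rather than a merely positive current) is essential, and why the refinement ``$\Theta\geq p^\star\gamma$'' stated after Theorem~1.1 is exactly the input needed.
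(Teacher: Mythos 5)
Your reduction to a fibration $p:X\to Y$ (or $p:\wh X\to\wh Y$) with $Y$ (birational to) a positive-dimensional subvariety of the Albanese torus is the same first move as in the paper, and the invocation of Theorem~1.1 (or its Corollary~4.1) to get a positive current in $c_1(K_{X/Y})+\{\beta\}$ is the correct mechanism. But the way you extract a contradiction diverges sharply from the paper, and the divergence is precisely where your argument has a real gap.

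The paper does \emph{not} argue by pushing forward to $Y$, shrinking $\beta$ and $\varepsilon$, and trying to contradict bigness or nefness on the base. That line of attack is, as you yourself observe, genuinely problematic: the current $\Theta$ produced by Theorem~1.1 depends on $\beta$ (since the fiberwise hypothesis is that $c_1(K_{X_y})+\{\beta\}|_{X_y}$ is K\"ahler), so one cannot simply let $\beta\to 0$; the nefness of $-K_X$ is an asymptotic condition that does not pair cleanly against a current which is merely pseudo-effective in the base directions; and fiber-integration of currents on $X_0$ does not obviously yield control near $W$. None of these issues are resolved in your sketch, and it is not clear that they \emph{can} be resolved along this route. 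Moreover, you do not actually need $\wh Y$ to be of general type; the paper only uses the much weaker fact (Kawamata, \cite{kaw0}) that $\kappa(\wh Y)\geq 1$, so the Ueno fibration step and the claim that the base is of general type are a detour.

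What the paper actually does, in both the projective and K\"ahler cases, is a short rigidity argument with exceptional divisors that entirely sidesteps the limiting issue. Writing $K_{\wh X}=\pi_X^\star K_X+E$ with $E$ the $\pi_X$-exceptional effective divisor, Corollary~4.1 applied with $L=\pi_X^\star(-K_X)$ gives a closed positive current $\Lambda$ in the class of $K_{\wh X/\wh Y}+\pi_X^\star(-K_X)=E-p^\star K_{\wh Y}$. Since $\kappa(\wh Y)\geq 1$, there are two distinct $\bQ$-effective divisors $W_1\neq W_2$ in $|K_{\wh Y}|_\bQ$, whence $\Lambda+p^\star W_1$ and $\Lambda+p^\star W_2$ are two \emph{distinct} closed positive currents in the class of the $\pi_X$-contractible divisor $E$. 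But any positive current in the class of a contractible divisor must be supported on $\mathrm{supp}(E)$, and having two distinct such currents in the same class forces a nontrivial linear relation in $H^{1,1}$ among the irreducible components of $E$ --- impossible. This is a finite, rigid contradiction; no $\varepsilon\to 0$ limit is involved. (In the K\"ahler case the desingularization $\wh X$ of the fiber product $X\times_Y\wh Y$ is chosen so that $E$ is disjoint from the generic fiber and $\pi_X$-contractible, which is exactly what makes the hypothesis of Corollary~4.1 --- nefness of $K_{\wh X_y}+L|_{\wh X_y}=E|_{\wh X_y}$ --- trivially verified.)

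So: your setup and use of Theorem~1.1 are on track, but the proposed final step is not a proof, and the missing idea is to trade the analytic limiting argument for the exceptional-divisor rigidity argument combined with $\kappa(\wh Y)\geq 1$.
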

\medskip

Besides Theorem 1.1, our proof is using some ideas from \cite{demailly3} and \cite{bouck}; we refer to the first 
paragraph for a detailed discussion about the connections with these articles.

\bigskip

\noindent Our paper is organized as follows. In the first 
paragraph we review the
proof of Theorem 1.4
under the additional assumption that $X$ is projective. As we have already mentioned, 
in this case the result is known, but the proof we will present is different from
the arguments in \cite{zhang}: actually, it can be seen as a 
simplified variation of some of the arguments invoked in
\cite{zhang1} (see also \cite{zhang2}). It is based upon a version of Theorem 1.1 under the hypothesis 
that the class $\{\beta\}$ corresponds to a line bundle (this result is completely covered by the 
literature on the subject, cf. \cite{bp3}, \cite{emanation1}). 

This serves us as a motivation for the
second paragraph, where we prove Theorem 1.1. In a word, we show that 
the so-called \emph{fiberwise twisted K\"ahler-Einstein} metric endows the bundle 
$K_{X/Y}|_{X_0}$ with a metric whose curvature is bounded 
from below by $-\beta$. 
Thus, the twisted version of the psh variation of the 
K\"ahler-Einstein metric established in \cite{schumi1} holds true. Finally, we show that
the local weights of the metric constructed in this way are bounded
near the analytic set $X\setminus X_0$. This is by no means 
automatic, given 
the tools which are involved in the proof (the approximation 
theorem in \cite{demailly4}, together with a precise 
version of the Ohsawa-Takegoshi extension theorem, \cite{bp3}).
The difficulty steams from the fact that in order to establish the estimates for the said
weights we cannot rely on the
geometry of the manifold $X_y$, 
as $y$ is approaching a singular value of the map $p$.

Finally, a complete proof of 
the Corollary 1.3 and Theorem 1.4 is provided, together with a few questions/comments.
\medskip

\noindent {\bf Acknowledgments.} 
It is my privilege and pleasure to thank S. Boucksom, F. Campana, J.-P. Demailly, A. H\"oring,
G. Schumacher, Y.-T. Siu, T. Peternell and H. Tsuji for enlightening 
discussions about many topics presented here.

\bigskip

\section {Surjectivity of the Albanese map: review of the projective case}

\medskip

\noindent As we have already mentioned in the introduction,
our proof for the subjectivity of the Albanese map corresponding to the compact K\"ahler
manifolds with nef anti-canonical class relies heavily on Theorem 1.1.
However, if the manifold $X$ is projective, then the following stronger version of Theorem 1.1 was obtained in \cite{bp3}, 
\cite{emanation1}.

\begin{theorem} Let $(F, h_F)$ be a line bundle on $X$, endowed with a metric with
semi-positive definite curvature form.
We assume moreover that for some generic $y\in Y$ the bundle
$$\displaystyle k(mK_{X_y}+ F)$$ 
admits a section which is $L^{2\over km}$--integrable with respect to 
$h_F^{1/m}$, where $m$ is a positive integer. 
Then the bundle $mK_{X/Y}+ F$ is pseudo-effective. 

\end{theorem}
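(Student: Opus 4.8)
The plan is to reduce the pseudo-effectivity of $mK_{X/Y}+F$ to a statement about direct images of twisted pluricanonical forms, and then to invoke the Ohsawa--Takegoshi extension theorem in the precise form given in \cite{bp3} to build a metric on $mK_{X/Y}+F$ whose curvature is a positive current. First I would pass to the sheaf
\[
\cE_m:=p_\star\bigl(\cO_X(mK_{X/Y}+F)\ot \cI(h_F^{1/m})\bigr),
\]
more precisely to its reflexive hull, which is a torsion-free coherent sheaf on $Y$; the hypothesis guarantees that $\cE_m$ has positive rank, since over a generic $y$ the restriction $mK_{X_y}+F|_{X_y}$ carries an $L^{2/(km)}$-integrable $k$-th tensor section, hence (after adjusting by $K_{X/Y}|_{X_y}=K_{X_y}$) a genuine section of the fibrewise twisted pluricanonical bundle with the required integrability against $h_F^{1/m}$. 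The key input is then the theorem of \cite{bp3}, \cite{emanation1} asserting that $\cE_m$, equipped with the canonical $L^{2/m}$-metric built from $h_F$, is positively curved in the sense of Griffiths (a singular Hermitian metric with semi-positive curvature current); this is exactly the place where the Ohsawa--Takegoshi extension theorem with optimal constant enters, to extend fibrewise sections with controlled $L^{2/m}$-norm.

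Next I would convert the positivity of $\cE_m$ into positivity of a line bundle on a modification of $X$. Taking a suitable log-resolution and working with the relative $m$-Bergman kernel metric directly on $mK_{X/Y}+F$, one obtains on $X_0=p^{-1}(Y\setminus W)$ a singular metric $h_m$ on $mK_{X/Y}+F$ whose local weights are plurisubharmonic: over the smooth locus the curvature of the $m$-th relative Bergman kernel metric dominates $-\Theta_{h_F}(F)\le 0$ composed with the Griffiths-semipositivity of $\cE_m$, and a Fubini-type computation shows $\sqrt{-1}\,\Theta_{h_m}(mK_{X/Y}+F)\ge 0$ as a current on $X_0$. The remaining, and genuinely delicate, step is to show that the local weights of $h_m$ do not tend to $-\infty$ too fast near the analytic set $X\setminus X_0$, so that the current extends across it as a closed positive current in the class $c_1(mK_{X/Y}+F)$; here one uses that $h_m$ is the Bergman kernel associated to an algebraic direct image sheaf, so its weights are (locally) suprema of logarithms of moduli of holomorphic functions, hence quasi-psh with at worst analytic singularities, and therefore extend by the classical extension theorem for psh functions across analytic sets of codimension $\ge 1$ once we know they are bounded above, which follows from the coherence of $\cE_m$.

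The main obstacle is precisely this last extension across the singular fibres: a priori the $m$-Bergman kernel metric is defined only fibrewise over $Y\setminus W$, and controlling its behaviour as $y\to W$ cannot be done by studying the geometry of $X_y$ (which degenerates). In the projective setting one circumvents this by the algebraicity of $\cE_m$ — one replaces the analytic Bergman construction by the minimal singular metric on the globally generated (after twist) line bundle $mK_{X/Y}+F+p^\star A$, $A$ ample on $Y$, letting $A\to 0$, which automatically produces weights with analytic singularities that are globally bounded above and hence extend; this is the technical advantage of the hypothesis that $\{\beta\}$ corresponds to a line bundle and explains why Theorem 2.1 is "completely covered by the literature" while Theorem 1.1 requires the Monge--Amp\`ere substitute of \cite{schumi1}. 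I would then conclude by noting that the extended current lies in $c_1(mK_{X/Y}+F)$, which is the definition of pseudo-effectivity.
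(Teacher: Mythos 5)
The paper does not actually prove Theorem 2.1; it imports it from \cite{bp3}, \cite{emanation1} as the projective input to Corollary 2.2, so there is no written proof here to check your attempt against. Your sketch does gesture at the right ingredients (fibrewise Bergman kernel metric, $L^{2/m}$ Ohsawa--Takegoshi extension, a uniform upper bound for the local weights near the critical locus), but two steps as written would fail. The first is the reduction to a non-zero direct image: you assert that a section of $k(mK_{X_y}+F)$ yields, ``after adjusting,'' a genuine section of the fibrewise twisted pluricanonical bundle, so that $\cE_m:=p_\star\bigl(\cO_X(mK_{X/Y}+F)\ot\cI(h_F^{1/m})\bigr)$ has positive rank. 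This is false---one cannot take $k$-th roots of holomorphic sections, and the hypothesis gives no section of $mK_{X_y}+F$ itself. The correct maneuver is to carry out the Bergman kernel construction for the bundle $kmK_{X/Y}+kF$ with the $L^{2/(km)}$-norm (for which the hypothesis does supply a fibrewise section of finite norm), producing a closed positive current in $c_1(kmK_{X/Y}+kF)$; the pseudo-effectivity of $mK_{X/Y}+F=\frac{1}{k}(kmK_{X/Y}+kF)$ as a $\bQ$-line bundle then follows because the pseudo-effective cone is a cone.

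Second, you route the positivity of the line bundle through Griffiths semi-positivity of the direct image $\cE_m$ and then try to descend via ``a Fubini-type computation.'' This conflates two related but distinct results. What is actually used is the plurisubharmonic variation of the relative $m$-th Bergman kernel, a statement made directly on the local weights of a singular metric on the line bundle $kmK_{X/Y}+kF$; it does not pass through Griffiths positivity of a coherent direct image. Likewise, the extension of the weight across $X\setminus X_0$ is not a consequence of coherence of $\cE_m$ or of an ``add $p^\star A$, $A$ ample, then let $A\to 0$'' device: as in Section 3.3 of this paper (in the transcendental setting), one extends fibrewise sections to a neighborhood in $X$ via the $L^{2/(km)}$ Ohsawa--Takegoshi theorem with a constant independent of $y$ and $m$, and the resulting uniform upper bound on the local weights is what permits the psh extension across the critical locus.
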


\medskip

\noindent As a consequence, we infer the following statement.

\begin{corollary} Let $p: X\to Y$ be a surjective map between 
non-singular projective manifolds. We consider $L\to X$ a nef line bundle, such that 
$\displaystyle H^0\big(X_y, K_{X_y}+ L|_{X_y}\big)\neq 0$. Then the bundle 
$K_{X/Y}+ L$ is pseudo-effective.

\end{corollary}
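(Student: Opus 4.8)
The plan is to deduce the statement from Theorem 2.1 by choosing the data $(F,h_F)$ and the integer $m$ appropriately. I would take $m=1$ and $k=1$, and let $F:=L$ equipped with a metric $h_F=h_L$ whose curvature is semi-positive; this is possible precisely because $L$ is nef, although a nef line bundle need not carry a genuinely semi-positive metric, so this is the first point that needs care. To get around the fact that nefness only gives metrics with curvature $\geq -\varepsilon\omega$, I would first pass to the line bundle $F_\varepsilon:=L$ with a perturbed metric, or more cleanly, I would apply Theorem 2.1 to $F=L+\varepsilon A$ for an ample $A$ (so that $F$ genuinely carries a positively curved metric) and then let $\varepsilon\to 0$; pseudo-effectivity of $K_{X/Y}+L+\varepsilon A$ for all $\varepsilon>0$ forces pseudo-effectivity of $K_{X/Y}+L$ since the pseudo-effective cone is closed. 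A second point requiring attention is that for $\varepsilon>0$ small, $H^0(X_y,K_{X_y}+L|_{X_y}+\varepsilon A|_{X_y})\neq 0$ still holds — this follows from the hypothesis $H^0(X_y,K_{X_y}+L|_{X_y})\neq 0$ together with the fact that adding an effective (or ample) twist preserves non-vanishing of sections of the canonical-type bundle, by viewing a section of the smaller bundle as a section of the larger one after multiplying by a section cutting out a divisor in $|\varepsilon A|_{X_y}|$ (after clearing denominators to make $\varepsilon A$ integral).

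Concretely, the key steps in order would be: (1) fix an ample line bundle $A$ on $X$ and, for each rational $\varepsilon>0$ clearing denominators so that $\varepsilon A$ is an honest line bundle, equip $F_\varepsilon:=L+\varepsilon A$ with a smooth metric of strictly positive curvature (possible since $L$ nef implies $L+\varepsilon A$ ample, hence positive); (2) check the fiberwise hypothesis of Theorem 2.1 with $m=k=1$: namely $H^0(X_y,K_{X_y}+F_\varepsilon|_{X_y})\neq 0$ for generic $y$, which we get by pushing forward a nonzero section of $H^0(X_y,K_{X_y}+L|_{X_y})$ along the inclusion induced by an effective divisor in $|(\varepsilon A)|_{X_y}|$ — note the $L^2$-integrability condition is automatic here since $m=1$ and the metric on $F_\varepsilon$ is smooth, so any holomorphic section is square-integrable on the compact fiber $X_y$; (3) invoke Theorem 2.1 to conclude $K_{X/Y}+F_\varepsilon=K_{X/Y}+L+\varepsilon A$ is pseudo-effective; (4) let $\varepsilon\to 0$ through rational values and use closedness of $\Psef(X)$ to conclude that $K_{X/Y}+L$ is pseudo-effective.

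The main obstacle I anticipate is \emph{not} the limiting argument (which is soft) but rather ensuring that the fiberwise non-vanishing survives the perturbation in a way compatible with Theorem 2.1's precise integrability requirement — i.e. that the section of $K_{X_y}+F_\varepsilon|_{X_y}$ we produce really satisfies the $L^{2/km}$-integrability against $h_F^{1/m}$. With $m=1$ this dissolves, but one must be slightly careful that the generic-$y$ statement for $L$ transfers to a generic-$y$ statement for $L+\varepsilon A$ uniformly enough; since "generic" means "outside a proper analytic subset of $Y$" and we are taking countably many $\varepsilon$, the intersection over all these $\varepsilon$ of the good loci is still the complement of a countable union of proper analytic sets, hence nonempty (indeed dense), so a common generic $y$ can be chosen. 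Alternatively, and perhaps more in the spirit of the paper, one could apply Theorem 2.1 directly with $F=L$ by noting that a nef line bundle on a projective manifold does admit, for the purposes of the pseudo-effectivity conclusion, an approximation by positively curved metrics and that the statement of Theorem 2.1 is itself stable under such approximation; I would phrase the final writeup whichever way keeps the citation to \cite{bp3}, \cite{emanation1} cleanest.
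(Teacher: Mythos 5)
Your broad strategy --- perturb by a small multiple of an ample bundle $A$, apply Theorem~2.1, then pass to the limit using closedness of the pseudo-effective cone --- is the correct one and matches the spirit of the paper. However, there is a genuine gap in the way you set up the application of Theorem~2.1, and it is precisely the point you dismiss when you write ``with $m=1$ this dissolves.''

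The problem is that $F_\varepsilon := L + \varepsilon A$ is \emph{not} a line bundle for any $0 < \varepsilon < 1$: since $A$ is a fixed ample line bundle, $\varepsilon A$ is only a $\bQ$- (or $\bR$-) divisor class for fractional $\varepsilon$, and there is no way to ``clear denominators'' that keeps both $F_\varepsilon$ integral \emph{and} $\varepsilon$ tending to $0$. Clearing denominators in $\varepsilon = 1/m$ means multiplying the whole adjoint bundle by $m$, i.e.\ passing from $K_{X/Y}+L+\tfrac1m A$ to $m K_{X/Y}+ mL + A$; but then you are no longer in the case $m=1$ of Theorem~2.1. This is exactly what the paper does: it sets $L_m := mL + A$ (an honest ample line bundle for every positive integer $m$), verifies the hypothesis of Theorem~2.1 for the pair $(m, F = L_m)$ using the section $u^{\otimes m}\otimes s_A$ of $mK_{X_y}+L_m|_{X_y}$ (whose $L^{2/m}$-integrability against $h_{L_m}^{1/m}$ is automatic on the compact fiber), concludes that $mK_{X/Y}+mL+A = m(K_{X/Y}+L)+A$ is pseudo-effective, and then divides by $m$ and lets $m\to\infty$. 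So the $m>1$ generality of Theorem~2.1 is not an option you can route around; it is the mechanism that turns your $\bQ$-twist $\varepsilon A$ into an integral one. Your secondary concern about choosing a $y$ generic for countably many $\varepsilon$ simultaneously is a non-issue by comparison (and in the paper's setup a single $y$ with $H^0(X_y, K_{X_y}+L|_{X_y})\neq 0$ suffices for every $m$ at once). The alternative sketched in your final sentence --- applying Theorem~2.1 ``directly'' to $F=L$ with an approximately positive metric and invoking unproved ``stability under approximation'' --- does not close the gap either; that stability is exactly what the $L_m$ argument is there to establish.
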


\begin{proof} Let $A\to X$ be a very ample line bundle. Then for each positive integer 
$m$ we define the bundle 
$$L_m:= mL+ A;$$
it is ample, hence it can be endowed with a metric $h_m$ with positive definite curvature.
We consider the bundle $mK_{X/Y}+ L_m$; by hypothesis, there exists a section 
$ \displaystyle u\in H^0\big(X_y, K_{X_y}+ L|_{X_y}\big)$, so the bundle 
$$mK_{X/Y}+ L_m|_{X_y}$$
admits a non-trivial section, e.g. $u^{\otimes m}\otimes s_A$ where $s_A$ is a non-zero section of $A$. By Theorem 2.1 the bundle $mK_{X/Y}+ L_m$ is pseudo-effective; as 
$m\to \infty$,  we infer that $K_{X/Y}+ L$ is pseudo-effective.
\end{proof}
\medskip

\noindent We will explain next the relevance of the previous result
in the proof of Theorem 1.4 under the assumption that
$X$ is projective; we first recall a few notions.
\smallskip

Let $X$ be a non-singular manifold such that $-K_X$ is nef, and let 
$$\alpha_X: X\to {\rm Alb}(X) $$
be its Albanese morphism. We assume that $\alpha_X$ is not surjective, and let 
$Y\subset {\rm Alb}(X)$ be the image of $\alpha_X$. 

We denote by $\pi_Y: \wh Y\to Y$ the desingularization of $Y$, and let $p: \wh X\to \wh Y$ 
be the map obtained by resolving the indeterminacy of the rational map
$X\dashrightarrow \wh Y$.
 
\noindent We apply Corollary 2.2
with the following data 
$$X:= \wh X, \quad Y:= \wh Y$$ 
and 
$L:= \pi_X^\star (-K_X)$; here we denote by $\pi_X: \wh X\to X$ 
the modification of $X$, so that we have 
$$\pi_Y\circ p= \alpha_X\circ \pi_X.$$ 
The hypothesis required by Corollary 2.2 are quickly seen to be verified:
indeed, the nefness of the bundle $L$ is due to the fact that 
$-K_X$ is nef, and if we denote by $E$ the effective divisor
such that 
\begin{equation} \label{2}K_{\wh X}= \pi^\star (K_X)+ E\end{equation}
then we see that $\displaystyle K_{\wh X_y}+ L$ is simply equal to 
$E|_{\wh X_y}$. This bundle is clearly effective.

Hence we infer that
\begin{equation} \label{3}K_{\wh X/\wh Y}+ \pi_X^\star (-K_X)\end{equation}
is pseudo-effective. But this bundle equals $E- p^\star (K_{\wh Y})$; let 
$\Lambda$ be a closed positive current in the class corresponding to 
$E- p^\star (K_{\wh Y})$. 
Since the Kodaira dimension of $K_{\wh Y}$ is at least 1 (we refer to \cite{kaw0}
for a justification of this property), we obtain 
two $\bQ$-effective divisors say $W_1\neq W_2$ linearly equivalent 
with $K_{\wh Y}$. 
\smallskip

As a conclusion, we obtain two different closed positive currents 
belonging to the class of 
the exceptional divisor $E$, namely $\Lambda+ p^\star (W_j)$ for $j= 1, 2$. 
This gives a contradiction, since any closed positive current linearly equivalent to
$E$ must be $\pi_X$-contractible, so its support is contained in the support of 
$E$. The existence of two closed positive currents having the support contained in $E$
whose cohomology classes coincide shows that one of the irreducible 
components of the support of $E$ must be equal to 
a linear combination of the other components in $H^{1, 1}(X, \bR)$. This is of course absurd.
\qed

\medskip

\begin{rem}\rm 
The proof above shows that Theorem 1.4 still holds if $X$ is projective, and if
we replace the hypothesis \emph{$-K_X$ nef}
with the hypothesis \emph{$-K_X$ pseudo-effective, and the multiplier ideal sheaf associated to some of its positively curved metrics is equal to the structural sheaf }. The arguments are absolutely similar.

\end{rem}

\medskip

\begin{rem}\rm Let $X$ be a Fano manifold, and let $p:X\to Y$ be a submersion onto a non-singular manifold  $Y$. Then it follows that $Y$ is Fano as well (see \cite{KMM}, \cite{fujinoGong}). 
This result can be obtained \emph{via} the following elegant argument,
very recently found and explained to us by 
S. Boucksom. By the results e.g. in \cite{bo3}, the direct image of the bundle 
$$K_{X/Y}+L$$
is positive provided that $L$ is an ample line bundle. We take $L= -K_X$ and we are done. A similar idea, $\varepsilon$-close to our arguments in this section
can be found in the article \cite{demailly3} by J.-P. Demailly,
Th. Peternell and M.Schneider (cf. the proof of their Theorem 2.4).

\end{rem}
\bigskip

\section{Twisted Kahler-Einstein metrics and their variation}

\medskip

\noindent In this paragraph we are going to prove Theorem 1.1. We start (cf. section 3.1)
with a few notations/remarks concerning the metrics induced on $K_{X/Y}$ by a $(1,1)$-form on $X$ which is positive definite along the fibers of $p: X\to Y$. The next section 3.2 is the main part of our paper: we show that the fiberwise twisted K\"ahler-Einstein metric 
(which exists and it is unique, thanks to the fact that the class
$$c_1\big(K_{X_y}\big)+ \{\beta\}|_{X_y}$$
is K\"ahler, for each $y\in Y_0$) endow the bundle $\displaystyle K_{X/Y}|_{X_0}$ with a metric whose curvature is strictly greater than $-\beta$. As we have already mentioned, at this point we will adapt to our setting 
the computations in \cite{schumi1}, \cite{siu1}. In the last subsection of this paragraph, we show that this
metric extends across the singularities of the map $p$. 
\medskip

\subsection{Metrics for the relative canonical bundle of a fibration}

\medskip

\noindent Let $p: X\to Y$ be a surjective map; here $X$ and $Y$ are 
assumed to be compact K\"ahler manifolds. 
We are using the notations/conventions in the introduction, so that the restriction 
$p: X_0\to Y\setminus W$ becomes a surjective, smooth, 
proper map between two complex manifolds. 
\smallskip

We 
consider a smooth (1,1)-form $\rho$ on $X$, whose restriction to the fibers of $p$
is positive definite. Then $\rho$ induces a metric on the bundle $K_{X/Y}$
as follows. 

Let $x\in X$ be a point, and let $U$ be a coordinate set of $X$ centered at 
$x$. We denote by $z^1,..., z^{n+d}$ a coordinate system on $U$, and we equally introduce 
$t^1,..., t^d$ 
coordinates near the point $y= p(x)$. This data induces a trivialization 
of the relative canonical bundle, with respect to which the weight of the 
metric we want to introduce is given by the function $\Psi_U$, defined by the equality
\begin{equation} \label{3}\rho^n \wedge \prod_{j= 1}^d \sqrt{-1}p^\star (dt^j\wedge dt^{\ol j})= e^{\Psi_U} \prod_{i= 1}^{n+d} 
\sqrt{-1}dz^i\wedge dz^{\ol i}.\end{equation}

\noindent Here the dimension of $X$ is assumed to be $n+d$, and the dimension of $Y$
equals $d$.
The functions $(\Psi_U)$ glue together as weights of a globally defined 
metric denoted by $\displaystyle h_{X/Y}^\rho$ on the relative canonical bundle; the corresponding curvature form is 
simply $\sqrt {-1}\ddbar \Psi_U$. 

We remark that we can very well define the function 
$\Psi_U$ even if the point $x$ projects into a singular value of 
$p$; however, the resulting metric $\displaystyle h_{X/Y}^\rho$ 
will be identically $\infty$ 
along the zero set of the Jacobian of the map $p$ (in other words, the weight $\Psi_U$
acquires a log pole). Thus,
the metric $\displaystyle h_{X/Y}^\rho$ will be singular in general, 
even if to start with we are using
a non-singular metric $\rho$. 

A simple example is provided by the map $p: \wh X\to X$, the blow-up of 
a manifold $X$ along a subset $W$. As one can see right away from the formula \eqref{3},
the metric $\displaystyle h_{X/Y}^\rho$
is equal to the singular metric associated to the exceptional divisor
(so it is independent of $\rho$).

\smallskip

\noindent In the next section, we will evaluate the positivity of the curvature 
of the metric $\displaystyle h_{X/Y}^\rho$; for this purpose, we have to 
find a lower bound for the quantity
\begin{equation} \label{4} \sum_{\alpha, \beta}\frac{\partial^2\Psi_U}{\partial z^\alpha\partial z^{\ol \beta}}(x)v^\alpha v^{\ol \beta}\end{equation}
where 
$$v:= \sum_\alpha v^\alpha\frac{\partial}{\partial z^\alpha}$$
is a tangent vector at $X$ in $x$. In order to obtain a lower bound for the quantity 
in (2), it is enough to consider a well-chosen restriction of our initial map
$p$, namely
$$\wt p: X_{\bD}\to \bD$$ 
where $\bD\subset Y$ is a disk containing the point $p(x)$, such that the vector $v$ 
belongs to the tangent space at $x$ to the 
complex manifold $\displaystyle X_{\bD}:= ~p^{-1}(\bD)$. Such a choice is 
clearly possible, and we formulate next our conclusion as follows.
\smallskip

\begin{rem} Let $\gamma$ be a real (1,1)-form on $X$. We assume that for each 
generic enough disk $\bD\subset Y$ so that the analytic subset 
$\displaystyle X_{\bD}:= ~p^{-1}(\bD)$ of $X$ is non-singular we have
$$\Theta_{h_{X_{\bD}/\bD}^\rho}(K_{X_{\bD}/\bD})\geq \gamma|_{X_{\bD}}.$$
 Then we have 
$$\Theta_{h_{{X}/Y}^\rho}(K_{X/Y})\geq \gamma$$ 
 on $X$.
 \end{rem}
 \smallskip
 
 \noindent This is absolutely clear, given the formula (4). Therefore, we will restrict our attention to families over 1-dimensional bases, as long as we are only interested in 
 the curvature properties of the metric $h_{{X}/Y}^\rho$.
 
 \medskip
 
 \noindent After these general considerations, we show here that in the context of Theorem 1.1
 a very special $(1,1)$-form $\rho$ can be obtained as follows.
  
 Let $\omega$ be a K\"ahler form on $X$; as explained at the beginning of the 
 current section, we can construct a metric $h_{X/Y}^\omega$ on the bundle $K_{X/Y}$
 induced by it. We recall that we denote by $\beta$ a semi-positive definite
 form on $X$ given by hypothesis of Theorem 1.1. 
 
 Moreover, we know that for each $y\in Y\setminus W$, the cohomology class
\begin{equation} \label{6}\{\Theta_{h_{X/Y}^\omega}(K_{X/Y})+ \beta\}|_{X_y}\end{equation}
is K\"ahler, again by hypothesis of 1.1. 
Therefore, by the main result of S.-T. Yau in \cite{yau} we have.

\begin{theorem}[\cite{yau}]
There exists a unique function
$\varphi_y\in {\mathcal C}^\infty (X_y)$ such that 
\begin{equation} \label{+}
\Theta_{h_{X/Y}^\omega}(K_{X/Y})+ \beta|_{X_y}+ \sqrt{-1}\ddbar \varphi_y> 0,
\end{equation}
and such that $\varphi_y$ is the solution of the next Monge-Amp\`ere equation
\begin{equation} \label{7}(\Theta_{h_{X/Y}^\omega}(K_{X/Y})+ \beta|_{X_y}+ \sqrt{-1} \ddbar \varphi_y)^n=
e^{\varphi_y}\omega^n.\end{equation}

\end{theorem}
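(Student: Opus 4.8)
The statement is exactly the Calabi conjecture in the form established by S.-T. Yau in \cite{yau} (and, for this sign of the equation, by Aubin), so the plan is to recall how it specializes to our situation and to indicate the structure of the argument. Write $\alpha_y:= \Theta_{h_{X/Y}^\omega}(K_{X/Y})+ \beta|_{X_y}$, a smooth real $(1,1)$-form on the compact $n$-dimensional K\"ahler manifold $X_y$ whose cohomology class $\kappa:= c_1(K_{X_y})+ \{\beta\}|_{X_y}$ is K\"ahler by hypothesis. I must produce a function $\varphi_y\in {\mathcal C}^\infty(X_y)$ with $\alpha_y+ \sqrt{-1}\ddbar\varphi_y> 0$ and $(\alpha_y+ \sqrt{-1}\ddbar\varphi_y)^n= e^{\varphi_y}\omega^n$.

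For uniqueness I would argue by the maximum principle. If $\varphi$ and $\varphi'$ both solve the equation, then at a point $x_0$ where $\varphi- \varphi'$ is maximal one has $\sqrt{-1}\ddbar(\varphi- \varphi')(x_0)\leq 0$, so $\alpha_y+ \sqrt{-1}\ddbar\varphi\leq \alpha_y+ \sqrt{-1}\ddbar\varphi'$ as (positive) Hermitian forms at $x_0$, whence $e^{\varphi(x_0)}\omega^n= (\alpha_y+ \sqrt{-1}\ddbar\varphi)^n\leq (\alpha_y+ \sqrt{-1}\ddbar\varphi')^n= e^{\varphi'(x_0)}\omega^n$ and therefore $\varphi(x_0)\leq \varphi'(x_0)$. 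Thus $\varphi\leq \varphi'$ on $X_y$, and by symmetry $\varphi= \varphi'$.

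For existence, since $\kappa$ is K\"ahler I would fix a K\"ahler representative $\omega_y= \alpha_y+ \sqrt{-1}\ddbar\psi_y$ with $\psi_y\in {\mathcal C}^\infty(X_y)$ and substitute $\varphi_y= \phi+ \psi_y$; the problem then becomes $(\omega_y+ \sqrt{-1}\ddbar\phi)^n= e^{\phi+ F_y}\omega_y^n$ with $\omega_y+ \sqrt{-1}\ddbar\phi> 0$, where $e^{F_y}:= e^{\psi_y}\omega^n/\omega_y^n$ is a fixed smooth positive density. This is the complex Monge-Amp\`ere equation in which the unknown enters the exponential with positive coefficient, and I would solve it by the continuity method on the family $(\omega_y+ \sqrt{-1}\ddbar\phi_s)^n= e^{\phi_s+ sF_y}\omega_y^n$, $s\in [0,1]$, starting from $\phi_0= 0$. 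Openness at a solution holds because the linearised operator $u\mapsto {\rm tr}_{\omega_{\phi_s}}(\sqrt{-1}\ddbar u)- u$ has trivial kernel (maximum principle) and is hence invertible. Closedness requires the a priori estimates: the $C^0$ bound is immediate here, since at a maximum of $\phi_s$ one obtains $e^{\phi_s+ sF_y}\leq 1$ and symmetrically at a minimum, so $\|\phi_s\|_{C^0}\leq \sup_{X_y}|F_y|$ uniformly in $s$; the second-order estimate ${\rm tr}_{\omega_y}\omega_{\phi_s}\leq C$ follows from the Aubin--Yau inequality for $\Delta_{\omega_{\phi_s}}\log {\rm tr}_{\omega_y}\omega_{\phi_s}$ combined with the $C^0$ bound and the maximum principle applied to ${\rm tr}_{\omega_y}\omega_{\phi_s}- A\phi_s$ for $A$ large; the resulting uniform ellipticity then yields a $C^{2,\alpha}$ bound by the Evans--Krylov theorem, and Schauder bootstrapping produces uniform $C^k$ bounds for every $k$. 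Consequently the set of admissible parameters $s$ is open, closed and nonempty, hence equals $[0,1]$, and $\varphi_y:= \phi_1+ \psi_y$ is the desired smooth solution, with $\alpha_y+ \sqrt{-1}\ddbar\varphi_y= \omega_y+ \sqrt{-1}\ddbar\phi_1> 0$ by construction.

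The only genuinely technical point is the a priori Laplacian estimate, which is classical; by contrast the $C^0$ estimate — the hard step in the Ricci-flat case — is free here because of the favourable sign of $\varphi_y$ in the exponential, so I do not expect any serious obstacle beyond quoting \cite{yau}.
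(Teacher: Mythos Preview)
Your proposal is correct and follows the same approach the paper indicates: the paper does not prove this statement but merely cites \cite{yau}, noting explicitly (in the paragraph following the theorem) that one first adds a potential $\psi_y$ so that $\alpha_y+\sqrt{-1}\ddbar\psi_y$ becomes a genuine K\"ahler metric, then invokes Yau's theorem for the resulting Monge--Amp\`ere equation, with uniqueness following from ``the usual arguments in \cite{yau}.'' Your write-up simply fleshes out this outline with the standard continuity-method details and the maximum-principle uniqueness argument.
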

\medskip

\noindent We stress on the fact that the differential form
\begin{equation} \label{8}\Theta_{h_{X/Y}^\omega}(K_{X/Y})+ \beta|_{X_y}\end{equation}
is not necessarily positive definite, but still the equation \eqref{7} admits a solution,
since the cohomology class corresponding to it \eqref{6} is 
K\"ahler. Hence the 
function $\varphi_y$ can be seen to be equal to the sum of two functions: a potential whose Hessian added to \eqref{8} makes it positive definite, and the solution of the Monge-Amp\`ere given by the main theorem in \cite{yau}. The potential we have to add
is by no means unique, but it is the case for the resulting function $\varphi_y$ 
(as one can see thanks to the usual arguments in \cite{yau}). 

Also, an important fact is that the function 
\begin{equation} \label{9}\varphi(x):= \varphi_y(x)\in {\mathcal C}^\infty (X_0),\end{equation}
where $y= p(x)$ is smooth.
That is to say, the function obtained by piecing together the solutions $\varphi_y$ 
of the \eqref{7}
is smooth on $X_0$; this is a standard consequence of the usual estimates for the Monge-Amp\`ere operator.

We consider next the (1,1)-form 
\begin{equation} \label{10}\rho:= \Theta_{h_{X/Y}^\omega}(K_{X/Y})+ \beta+ \sqrt{-1} \ddbar \varphi\end{equation}
on the manifold $X_0$. A first remark is that $\rho$ is definite positive when restricted to
$X_y$, for any $y\in Y_0$: this is contained in Yau's result \cite{yau}.
Thus, we can define a metric $\displaystyle h_{X/Y}^\rho$ on the bundle $\displaystyle K_{X/Y}|_{X_0}$; given the equation \eqref{7}, its curvature is rapidly computed as follows.

\begin{equation} \label{11}\Theta_{h_{X/Y}^\rho}(K_{X/Y})= \rho-\beta
\end{equation}
(we are using the relations \eqref{7} and \eqref{3} in order to derive this). 
\medskip

\noindent There are two main steps in the proof of Theorem 1.1, namely.

\begin{enumerate}

\item[\rm (i)] Show that the form $\rho$ is definite positive on $X_0$; this will imply the
positivity of the form $\displaystyle \Theta_{h_{X/Y}^\rho}(K_{X/Y})+ \beta$
on $X_0$.
\smallskip

\item[\rm (ii)] Show that the metric $h_{X/Y}^\rho$ extends across the set 
$X\setminus X_0$. As soon as this second step is performed, we can infer 
the positivity of $\displaystyle \Theta_{h_{X/Y}^\rho}(K_{X/Y})+ \beta$ as current on $X$.

\end{enumerate}
\smallskip

\noindent These points will be addressed in the next two paragraphs.

 
\subsection{The computation}
\medskip

\noindent As we have already mentioned, in order to analyze the positivity properties of
the for $\rho$ in \eqref{10} it is enough to restrict ourselves to a family over a 1-dimensional base.
Therefore we will assume that the map 
$$p: X\to Y$$
is a proper fibration over a 1-dimensional manifold $Y$; we equally assume that 
$X$ is non-singular, and so it is the generic fiber of $p$. 

Let $x$ be a point of $X$ such that the fiber $\displaystyle X_{y}:= p^{-1}(y)$
is non-singular; here we denote by $y:= p(x)$. Let $t$ be the coordinate on $Y$ centered at 
$y$, and let $(z^1,..., z^n)$ be local coordinates on the manifold $X_{y}$ so that 
the functions $(t, z^1,..., z^n)$ are local coordinates for $X$ at $x$ (we use here the notation $t$ 
for the inverse image of the coordinate on $Y$ via the map $p$, to avoid some notational complications).
\smallskip

With the notations in section 3.1, we write the form $\rho$  in coordinates as follows
\begin{align}
\rho = & \sqrt{-1}g_{t\ol t}dt\wedge d\ol t+ \sqrt{-1}\sum_\alpha g_{\alpha \ol t}dz^\alpha \wedge d\ol t+
\sqrt{-1}\sum_\alpha g_{t \ol \alpha}dt\wedge dz^{\ol \alpha}+ \nonumber \\
+ & \sqrt{-1}\sum_{\alpha, \gamma}g_{\gamma \ol \alpha}dz^{\gamma}\wedge dz^{\ol \alpha}. \nonumber 
\end{align}
We already know that $\rho$ is positive definite when restricted to $X_{y}$, hence it has at least 
$n$ positive eigenvalues. In local writing as above, this implies that the 
matrix $(g_{\gamma \ol \alpha})$ is invertible; we denote the coefficients of 
its inverse by $(g^{\ol \alpha \gamma})$ (with the convention that the $1^{\rm st}$
index is the line index of the associated matrix). 
In order to show that the $n+1^{\rm th}$ eigenvalue (in the ``base direction")
is equally positive,  
we consider the form $\rho^{n+ 1}$ on $X$. As it is well-known, we have
\begin{equation} \label{12}\rho^{n+ 1}= c(\rho)\rho^{n}\wedge \sqrt{-1}dt\wedge d\ol t 
\end{equation}
where the function $c(\rho)$ defined globally on $X_y$ by the preceding 
equality can be expressed locally near $x$ in coordinates as follows
\begin{equation} \label{13}c(\rho)= g_{t\ol t}- \sum_{\alpha, \gamma}g^{\ol \alpha\gamma}g_{t\ol\alpha}g_{\gamma\ol t}.\end{equation}
Our next goal will be to show that we have $\displaystyle c(\rho)|_{X_y}>0$, as this is equivalent to the fact that 
$\rho$ is positive definite. The method (cf. \cite{schumi1}) is to show that this function 
is the solution of a certain elliptic equation on $X_y$. The computations to follow are straightforward.
\smallskip

\noindent Let 
\begin{equation} \label{14}\Box_y:= -\sum_{i, j}g^{\ol j i}\frac{\partial^2}{\partial z^i\partial z^{\ol j}}\end{equation}
be the Laplace operator (with positive eigenvalues) associated to the metric $\rho|_{X_y}$.  
We will evaluate the expression 
\begin{equation} \label{15}\Box_y c(\rho)\end{equation}
by using the local coordinates fixed above; we can (and will) assume that $(z^1,..., z^n)$
are geodesic for the metric $\displaystyle \rho|_{X_{y}}$ at the point $x_0$. 

We first evaluate the expression
\begin{equation} \label{16}-\sum_{i, j}g^{\ol j i}\frac{\partial^2g_{t\ol t}}{\partial z^i\partial z^{\ol j}}; \end{equation}
a first observation, which will be used many times in what follows is that
\begin{equation} \label{17} \frac{\partial^2g_{t\ol t}}{\partial z^i\partial z^{\ol j}}= \frac{\partial^2g_{i\ol j}}{\partial t\partial \ol t}\end{equation}
since $\rho$ is locally $\ddbar$ exact, given the expression \eqref{10}. For any indexes $(i, j)$ we have 
\begin{equation} \label{18} g^{\ol j i} \frac{\partial^2g_{i\ol j}}{\partial t\partial \ol t}= \frac {\partial}{\partial t}
\Big(g^{\ol j i}\frac{\partial g_{i\ol j}}{\partial \ol t} \Big)- \frac {\partial g^{\ol j i}}{\partial t} \frac {\partial g_{i\ol j}}{\partial \ol t}
\end{equation}
The term $\displaystyle \frac {\partial g^{\ol j i}}{\partial t}$ can be written in terms of the 
$t$-derivative of $g_{i\ol j}$ since we have
\begin{equation} \label{19}\sum_s\frac {\partial g^{\ol s i}}{\partial t}g_{k\ol s}= - \sum_sg^{\ol s i}
\frac {\partial g_{k\ol s}}{\partial t}\end{equation}
which implies that
\begin{equation} \label{20}\sum_{s, k}\frac {\partial g^{\ol s i}}{\partial t}g_{k\ol s}g^{\ol j k}= - \sum_{s, k}g^{\ol s i}g^{\ol j k}
\frac {\partial g_{k\ol s}}{\partial t}\end{equation}
and thus we get
\begin{equation} \label{21}\frac {\partial g^{\ol j i}}{\partial t}= - \sum_{s, k}g^{\ol s i}g^{\ol j k}
\frac {\partial g_{k\ol s}}{\partial t}.\end{equation}
We notice that we have
\begin{equation} \label{22}\sum_{i, j}g^{\ol j i}\frac{\partial g_{i\ol j}}{\partial \ol t} = 
\frac{\partial }{\partial \ol t}\log(g)\end{equation}
where $g:= \det (g_{\alpha \ol \beta})$; in conclusion, we obtain the following identity
\begin{equation} \label{23}
-\sum_{i, j}g^{\ol j i}\frac{\partial^2g_{t\ol t}}{\partial z^i\partial z^{\ol j}}=
- \frac{\partial^2\log(g)}{\partial t\partial {\ol t}}-  
\sum_{s, k, i, j}g^{\ol s i}g^{\ol j k}
\frac {\partial g_{k\ol s}}{\partial t}\frac {\partial g_{i\ol j}}{\partial \ol t}
\end{equation}
In local coordinates, the equation \eqref{7} implies that we have
\begin{equation} \label{24}\frac{\partial ^2\log (g)}{\partial t\partial {\ol t}}= g_{t\ol t}- \beta_{t\ol t}\end{equation}
so we get
\begin{equation} \label{25}
-\sum_{i, j}g^{\ol j i}\frac{\partial^2g_{t\ol t}}{\partial z^i\partial z^{\ol j}}=
- g_{t\ol t}+ \beta_{t\ol t}-  
\sum_{s, k, i, j}g^{\ol s i}g^{\ol j k}
\frac {\partial g_{k\ol s}}{\partial t}\frac {\partial g_{i\ol j}}{\partial \ol t}
\end{equation}

We will detail next the computation for the factor
\begin{equation} \label{26} \sum_{i, j,\alpha, \gamma}g^{\ol j i}\frac{\partial^2}{\partial z^i\partial z^{\ol j}}
\Big(g^{\ol \alpha\gamma}g_{t\ol\alpha}g_{\gamma\ol t}\Big);
\end{equation}
given that the coordinates $(z^\alpha)$ are geodesic, the only terms we have to evaluate 
are the following

\begin{equation} \label{27} I_1:= \sum_{i, j,\alpha, \gamma}g^{\ol j i}g_{t\ol\alpha}g_{\gamma\ol t}\frac{\partial^2g^{\ol \alpha\gamma}}{\partial z^i\partial z^{\ol j}}, \end{equation}

as well as 
\begin{equation} \label{27+} 
I_2:= \sum_{i, j,\alpha, \gamma}g^{\ol j i}g^{\ol \alpha\gamma} g_{t\ol\alpha}\frac{\partial^2g_{\gamma\ol t}}{\partial z^i\partial z^{\ol j}}, \quad I_3:= \sum_{i, j,\alpha, \gamma}g^{\ol j i}g^{\ol \alpha\gamma} 
\frac{\partial g_{\gamma\ol t}}{\partial z^{\ol j}}
\frac{\partial g_{t\ol\alpha}}{\partial z^{i}} 
\end{equation}

together with their conjugates, and also
\begin{equation} \label{28} I_4:= \sum_{i, j,\alpha, \gamma}g^{\ol j i}g^{\ol \alpha\gamma} 
\frac{\partial g_{\gamma\ol t}}{\partial z^{i}}
\frac{\partial g_{t\ol\alpha}}{\partial z^{\ol j}}.
\end{equation}
\medskip

\noindent In order to simplify the term $I_1$, we observe that at $x$ we have
\begin{equation} \label{29}\frac{\partial^2g^{\ol \alpha\gamma}}{\partial z^i\partial z^{\ol j}}
= R^{\gamma \ol \alpha}_{i\ol j}\end{equation}
hence we get
\begin{equation} \label{30}I_1= \sum_{i, j,\alpha, \gamma}g^{\ol j i}g_{t\ol\alpha}g_{\gamma\ol t}R^{\gamma \ol \alpha}_{i\ol j}.\end{equation}
We observe that $\displaystyle \sum_{i, j}g^{\ol j i}R^{\gamma \ol \alpha}_{i\ol j}= \sum_{p, q}{\rm Ricci}_{p\ol q}g^{\ol q \gamma}
g^{\ol \alpha p}$ where we denote by $({\rm Ricci}_{p\ol q})$ the coefficients of the Ricci curvature of the 
metric $\rho$. By using the equation \eqref{7} we infer that 
\begin{align}
I_1= & -\sum_{p, q,\alpha, \gamma}g_{t\ol\alpha}g_{\gamma\ol t}g^{\ol q \gamma}
g^{\ol \alpha p}(g_{p\ol q}- \beta_{p\ol q})= \nonumber \\
= & -\sum_{p, q,\gamma}g_{t\ol q}g_{\gamma\ol t}g^{\ol q \gamma}+
\sum_{p, q,\alpha, \gamma}g_{t\ol\alpha}g_{\gamma\ol t}g^{\ol q \gamma}
g^{\ol \alpha p}\beta_{p\ol q} \nonumber
\end{align}
The other terms are evaluated in a similar way; we have

\begin{align}
I_2= & \sum_{i, j,\alpha, \gamma}g^{\ol j i}g^{\ol \alpha\gamma} g_{t\ol\alpha}\frac{\partial^2g_{\gamma\ol t}}{\partial z^i\partial z^{\ol j}}= \sum_{i, j,\alpha, \gamma}g^{\ol j i}g^{\ol \alpha\gamma} g_{t\ol\alpha}\frac{\partial^2g_{i\ol j}}{\partial z^\gamma\partial {\ol t}}= \nonumber \\
= & \sum_{i, j,\alpha, \gamma} g_{t\ol\alpha}g^{\ol \alpha\gamma}\frac{\partial^2\log (g)}{\partial z^\gamma\partial {\ol t}}= 
\nonumber \\
= & \sum_{i, j,\alpha, \gamma} g_{t\ol\alpha}g^{\ol \alpha\gamma}(g_{\gamma \ol t}- \beta_{\gamma \ol t})
= \nonumber \\
= & \sum_{i, j,\alpha, \gamma} g_{t\ol\alpha}g_{\gamma \ol t}g^{\ol \alpha\gamma}
- \sum_{i, j,\alpha, \gamma} g_{t\ol\alpha}g^{\ol \alpha\gamma}\beta_{\gamma \ol t}.
\nonumber 
\end{align}
as well as

\begin{align}
I_3= & \sum_{i, j,\alpha, \gamma}g^{\ol j i}g^{\ol \alpha\gamma} 
\frac{\partial g_{\gamma\ol t}}{\partial z^{\ol j}}
\frac{\partial g_{t\ol\alpha}}{\partial z^{i}}= \sum_{i, j,\alpha, \gamma}g^{\ol j i}g^{\ol \alpha\gamma} 
\frac{\partial g_{\gamma\ol j}}{\partial {\ol t}}
\frac{\partial g_{i\ol\alpha}}{\partial t}.
\nonumber 
\end{align}

\smallskip

\noindent We want to have an intrinsic interpretation of the factor 
$I_4$, so we consider next the vector field 
\begin{equation} \label{32}v:= \frac{\partial}{\partial t}- \sum_{i, j}g^{\ol j i}g_{t\ol j}\frac{\partial}{\partial z^i};\end{equation}
as it is well-known \cite{siu1}, \cite{schumi1}, $v$ is the horizontal lift of $\displaystyle \frac{\partial}{\partial t}$
with respect to the metric $\rho$. Then we see that we have
\begin{equation} \label{33}I_4= |\dbar v|^2,\end{equation}
and by combining all the equalities above, we infer that we have the compact formula
\begin{equation} \label{34}\Box_y c(\rho)= -c(\rho)+ |\dbar v|^2+ \beta (v, \ol v)
\end{equation}
The Ricci curvature of the metric $\displaystyle \rho|_{X_y}$is bounded from below by -1, by the 
equation \eqref{7}; hence precisely as in \cite{schumi1}, we infer that we have 
\begin{equation} \label{35}\inf_{X_y}c(\rho)\geq C\int_{X_y}(|\dbar v|^2+ |v|^2_\beta)dV_\rho\end{equation}
where $C$ only depends on the diameter of the fiber $(X_y, \rho)$. 
Hence, the form $\rho$ is positively defined in the base directions as well. 
\smallskip

In conclusion, 
the fiberwise twisted K\"ahler-Einstein metric $\rho$ defines a K\"ahler metric on $X_0$, the 
pre-image of the set $Y\setminus W$. Given the equation \eqref{11}, this 
means that the curvature of the metric $h^\rho_{X/Y}|_{X_0}$ is bounded by 
$-\beta$.  In the next section, we will show that this
metric extends across the singularities of $p$, and therefore the proof of theorem 1.1 will be complete. \qed
\medskip

\subsection{Extension across the singularities}

\smallskip

\noindent We will use the same notations as in the previous section. Given the point 
$x_0\in X_y$, we will derive an upper bound of the metric induced by 
$\rho$ on the relative canonical bundle. 

Let $\Omega$ be an open coordinate set in $X$ centered at $x_0$. We consider 
$\Omega_y:= \Omega\cap X_y$; we denote by $\psi_\beta$ a local potential of 
the K\"ahler metric $\beta$ on $\Omega$. We recall that we denote by 
$\Psi_\Omega$ the local weight of the metric on $K_{X/Y}$ induced by the 
metric $\omega$ (so implicitly we assume that we have fixed a	coordinate system
$\displaystyle (z^\alpha)_{\alpha= 1,..., n+d}$ on $\Omega$ and $\displaystyle (t^\gamma)_{\gamma= 1,..., d}$ near $y:= p(x_0))$. The function to be bounded 
from above is 
\begin{equation} \label{36}\tau_y:= \Psi_\Omega+ \psi_\beta+ \varphi|_{\Omega_y}\end{equation}

At first glance this may look very simple, since we have 
$\displaystyle \varphi |_{\Omega_y}= \varphi_y$, the solution of \eqref{7}, and thus
the function \eqref{36} is psh on $\Omega_y$. But the bound one can obtain from the 
meanvalue inequality are not good enough for our purposes, since they depend on the 
geometry of the manifold $X_y$: this is precisely what we want to avoid, as $y$ approaches the singular values of $p$.

The idea is first to approximate the function \eqref{36} with $\log$ of absolute values of holomorphic functions; then we show that the holomorphic functions involved in this process admit an extension to $\Omega$, where the use of Cauchy inequalities is 
``legitimate", since the manifold $X$ is non-singular and compact.

\noindent We recall next the following approximation 
result, cf. \cite{demailly4}.
\medskip

\begin{theorem}\cite{demailly4} Let ${\mathcal H}_y^{(m)}$ be the Hilbert space defined as follows
\begin{equation} \label{37}{\mathcal H}_y^{(m)}:= \big\{f\in {\mathcal O}(\Omega_y)\hbox{ such that }  \Vert f\Vert _y^2 = \int_{\Omega_y}|f|^2e^{-m\tau_y- \Vert z\Vert ^2}(dd^c\tau_y)^n< \infty\big\}.\end{equation}
Then we have 
\begin{equation} \label{38}\tau_y(x)= \lim_{m\to \infty}\sup_{f\in {\mathcal H}_y^{(m)}, 
\Vert f\Vert _y^2\leq 1}\frac{1}{m}\log |f(x)|^2\end{equation}
for every $x\in \Omega_y$.

\end{theorem}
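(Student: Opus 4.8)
The statement is Demailly's asymptotic approximation of a plurisubharmonic function by normalized logarithms of $L^2$ holomorphic functions (a Bergman-kernel type regularization); I sketch the two inequalities that together yield the claimed limit. Throughout write $\displaystyle\varphi_m(x):=\sup\big\{\tfrac1m\log|f(x)|^2:f\in{\mathcal H}_y^{(m)},\ \Vert f\Vert_y\le 1\big\}$, and recall that in our situation $\tau_y$ is smooth and \emph{strictly} plurisubharmonic on $\Omega_y$ (up to a pluriharmonic term its $dd^c$ is $\rho|_{X_y}>0$), so $(dd^c\tau_y)^n$ is a smooth volume form; after shrinking $\Omega$ we may assume that $(dd^c\tau_y)^n$ and the Lebesgue measure $dV$ are comparable on all of $\Omega_y$, with constants used in both directions below.

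\medskip

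\noindent\textbf{Upper bound: $\displaystyle\limsup_{m\to\infty}\varphi_m(x)\le\tau_y(x)$.} Fix $x\in\Omega_y$ and $f\in{\mathcal H}_y^{(m)}$ with $\Vert f\Vert_y\le1$. Since $|f|^2$ is plurisubharmonic, hence subharmonic, the sub-mean value inequality over a coordinate ball $B(x,r)\subset\Omega_y$ together with $dV\le C(dd^c\tau_y)^n$ gives
\[|f(x)|^2\le\frac{C_n}{r^{2n}}\int_{B(x,r)}|f|^2\,dV\le\frac{C_n'}{r^{2n}}\,\exp\Big(\sup_{B(x,r)}\big(m\tau_y+\Vert z\Vert^2\big)\Big)\int_{\Omega_y}|f|^2e^{-m\tau_y-\Vert z\Vert^2}(dd^c\tau_y)^n,\]
and the last integral is $\le1$. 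Taking $\tfrac1m\log$ and letting $r=r_m\to0$ slowly enough that $\tfrac1m\log(C_n'/r_m^{2n})\to0$ (e.g. $r_m=1/m$), the continuity of $\tau_y$ yields $\limsup_m\varphi_m(x)\le\tau_y(x)$.

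\medskip

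\noindent\textbf{Lower bound: $\displaystyle\liminf_{m\to\infty}\varphi_m(x)\ge\tau_y(x)$.} Fix $x_0\in\Omega_y$; shrinking $\Omega_y$ to a bounded pseudoconvex neighbourhood of $x_0$, apply the Ohsawa--Takegoshi extension theorem (extension from the point $x_0$) with the plurisubharmonic weight $m\tau_y+\Vert z\Vert^2$: this produces $f\in{\mathcal O}(\Omega_y)$ with $|f(x_0)|^2=e^{m\tau_y(x_0)}$ and
\[\int_{\Omega_y}|f|^2e^{-m\tau_y-\Vert z\Vert^2}(dd^c\tau_y)^n\le C_0\int_{\Omega_y}|f|^2e^{-m\tau_y-\Vert z\Vert^2}\,dV\le C_1\, e^{m\tau_y(x_0)}\cdot e^{-m\tau_y(x_0)-\Vert x_0\Vert^2}=C_1\,e^{-\Vert x_0\Vert^2},\]
where $C_0,C_1$ depend only on $n$ and the diameter of $\Omega_y$ (and on the comparison $(dd^c\tau_y)^n\le C\,dV$), not on $m$. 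Setting $g:=f/\Vert f\Vert_y$ we obtain $g\in{\mathcal H}_y^{(m)}$, $\Vert g\Vert_y\le1$, and $\tfrac1m\log|g(x_0)|^2\ge\tau_y(x_0)-\tfrac1m\log C_2$, whence $\liminf_m\varphi_m(x_0)\ge\tau_y(x_0)$.

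\medskip

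\noindent Combining the two estimates gives $\varphi_m(x)\to\tau_y(x)$ for every $x\in\Omega_y$, which is precisely \eqref{38}. The only non-elementary ingredient is the Ohsawa--Takegoshi theorem invoked in the lower bound (this is the ``precise version'' the paper refers to via \cite{bp3}); the delicate point in the upper bound is the coupled passage $m\to\infty$, $r_m\to0$, which goes through exactly because $\tau_y$ is here continuous — so no regularization of the weight is needed. If uniform convergence on compact subsets were required (it is not, for the statement as phrased), it would follow from the standard near-plurisubharmonicity of the functions $m\varphi_m+\Vert z\Vert^2$ together with Hartogs' lemma.
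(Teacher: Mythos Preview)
The paper does not supply its own proof of this statement: it is quoted verbatim as a result from \cite{demailly4} and then used as a tool in the extension argument of Section~3.3. So there is nothing to compare your argument against in the present paper.

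That said, your sketch is exactly Demailly's standard proof of the Bergman-kernel approximation theorem: the upper bound by the sub-mean-value inequality on shrinking balls, the lower bound by Ohsawa--Takegoshi extension from a point with weight $m\tau_y+\Vert z\Vert^2$. Both steps are carried out correctly, including the coupling $r_m\to 0$ with $\tfrac1m\log r_m^{-2n}\to 0$ and the observation that the smoothness and strict plurisubharmonicity of $\tau_y$ make the two volume forms comparable. One small inaccuracy: the reference \cite{bp3} in the paper is not invoked for \emph{this} theorem but for the $L^{2/m}$ extension step that comes \emph{after} it (equation \eqref{41}); the Ohsawa--Takegoshi input you need here is the classical $L^2$ version, which is already in \cite{demailly4}.
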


\medskip

\noindent In the statement above, the fact that the manifold $\Omega_y$ is Stein 
is of course crucial, since without this hypothesis we cannot approximate $\tau_y$
by using global holomorphic functions. The importance of the volume element 
$(dd^c\tau_y)^n$ will become clear in a moment.

Let $f\in {\mathcal H}_y^{(m)}$ be a holomorphic function, such that 
$\Vert f\Vert _y^2\leq 1$. By H\"older inequality we have
\begin{align}
\int_{\Omega_y}|f|^{2/m}e^{-\tau_y}(dd^c\tau_y)^n\leq &
\Big(\int_{\Omega_y}(dd^c\tau_y)^n\Big)^{\frac{m-1}{m}}\nonumber \\
\leq & C \nonumber
\end{align}
where $C$ can be taken to be the maximum between 1 and the 
volume of the fiber $X_y$ with respect to the K\"ahler class
$\displaystyle c_1(K_{X_y})+ \{\beta\}$; hence, it is a constant independent of 
$m$ and $y$. 

We use now again the equation \eqref{7}: in local coordinates, it can we written as
\begin{equation} \label{39}(dd^c\tau_y)^n= e^{\tau_y- \varphi_\beta}\Big|\frac{dz}{dt}\Big|^2\end{equation}
where the notations are (hopefully...) self-explanatory.
Then the estimate above implies
\begin{equation} \label{40}\int_{\Omega_y}|f|^{2/m}e^{-\varphi_\beta}\Big|\frac{dz}{dt}\Big|^2\leq C\end{equation}
\smallskip

We now invoke the $L^{2/m}$ version of the Ohsawa-Takegoshi theorem obtained in 
\cite{bp3}: it implies the existence of a holomorphic function $F$, such that:

\begin{enumerate} 

\item[(a)] The restriction of $F$ to $\Omega_y$ is equal to $f$.
\smallskip

\item[(b)] There exists a numerical constant $C_0> 0$ 
\emph{independent of $m$} such that
\begin{equation} \label{41}\int_{\Omega}|F|^{2/m}e^{-\varphi_\beta}\big|dz\big|^2\leq C_0
\int_{\Omega_y}|f|^{2/m}e^{-\varphi_\beta}\Big|\frac{dz}{dt}\Big|^2\end{equation}

\end{enumerate}

\smallskip

\noindent In particular, this implies that the value $|F(x_0)|^{2/m}$ is bounded from above by a constant which is independent of $y$ and on $m$. Hence the weight function
$\tau_y$ have the same property (by the restriction statement (a) above), and this 
implies that the metric $h^\rho_{X/Y}|_{X_0}$
\emph{extends} as a singular metric for the relative canonical bundle of the fibration $p$; moreover, its curvature current is greater than $-\beta$. Theorem 1.1 is therefore completely proved. \qed
\medskip

\begin{rem} {\rm As we have already mentioned, in the ``linear" context, the positivity properties of the relative adjoint bundles of type
$K_{X/Y}+ L$ is established in a very explicit way, by showing that the fiberwise Bergman kernel has
a psh variation. The only assumptions 
which are needed to obtain a non-trivial positively curved metric is the positivity of $L$, together with the
existence of an $L^2$ section of $\displaystyle K_{X_y}+ L|_{X_y}$.

In order to compare this theory with our previous considerations, let $p: X\to Y$ be a map 
such that $K_{X_y}$ is ample, for some generic $y\in Y$. In the article 
\cite{emanation1}, H. Tsuji shows that his method of iterating the Bergman kernels can be used to construct inductively a metric on the bundle $mK_{X/Y}+ A$, for any $m\geq 1$ (here we denote by 
$A$ some positive enough line bundle). Moreover, he shows that the metric obtained 
on $K_{X/Y}$ by a limiting process is \emph{precisely} the fiberwise K\"ahler-Einstein metric considered in G. Schumacher paper \cite{schumi1}. As we have seen, the method in \cite{schumi1} has the advantage that 
it offers a \emph{lower bound} for the curvature of the metric constructed on the relative adjoint class, but on the other hand, it should be further extended e.g. to encompass the case where the adjoint class has base points when restricted to fibers. 
}
\end{rem}

\bigskip

\section{Further corollaries, consequences and comments}

\medskip

\subsection{Proof of Corollary 1.2}

\smallskip

\noindent Let $p: \mathcal X\to \bD$
be a K\"ahler family. We denote by $\beta$ a K\"ahler metric on $\mathcal X$. 
For each $\varepsilon> 0$ and for each $t\in \bD$, the class
\begin{equation} \label{42}c_1(K_{{\mathcal X}_t})+ \varepsilon\{\beta\}\end{equation}
is K\"ahler, since by hypothesis the canonical bundle $\displaystyle K_{{\mathcal X}_t}$
is nef. The family $p$ is assumed to be non-singular, hence by the results we have obtained in the section 3.2, the class 
$$c_1(K_{{\mathcal X}/\bD})+ \varepsilon\{\beta\}$$
is K\"ahler. This means that $\displaystyle K_{{\mathcal X}/\bD}$ is nef, so Corollary 1.2 is proved. 
\qed

\medskip

\subsection{Proof of Theorem 1.4}

\smallskip

\noindent Let $X$ be a compact K\"ahler manifold, such that 
$-K_X$ is nef in the sense of the definition 1.3 in the introduction.
We denote by $\alpha_X: X\to {\rm Alb}(X)$ the Albanese morphism of $X$. 

As we have seen in the section 1, a successful approach 
towards the subjectivity of 
the 
map $\alpha_X$ 
in the projective case 
is using in an essential manner the positivity 
properties of the relative canonical bundle associated to the desingularization of 
$\alpha_X$. In the general case we will follow basically the same line of arguments, except that the K\"ahler version of the results 2.1, 2.1 is less general.
\medskip 

\noindent 
Indeed, as a consequence of Theorem 1.1 we infer the next statement.

\begin{corollary} Let $p: X\to Y$ be a surjective map between two
compact K\"ahler manifolds, which are assumed to be non-singular. Let $L\to X$
be a nef line bundle, such that 
the adjoint system $\displaystyle K_{X_y}+ L|_{X_y}$ is equally nef for any 
$y\in Y$ generic. Then the bundle $K_{X/Y}+ L$ is pseudo-effective.
\end{corollary}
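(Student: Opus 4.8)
The plan is to deduce Corollary~4.2 from Theorem~1.1 by the same perturbation-and-limit device used in the proof of Corollary~2.2, the only difference being that in the K\"ahler setting we must feed Theorem~1.1 a \emph{transcendental} semi-positive class $\{\beta\}$ rather than the first Chern class of an ample line bundle. First I would fix a K\"ahler metric $\omega_X$ on $X$ and, for each $\varepsilon>0$, set
$$\{\beta_\varepsilon\}:= c_1(L, h_L^\varepsilon)+ \varepsilon\{\omega_X\},$$
where $h_L^\varepsilon$ is a smooth metric on $L$ with $c_1(L,h_L^\varepsilon)\geq -\tfrac{\varepsilon}{2}\omega_X$, supplied by the metric nefness of $L$ (Definition~1.3). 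Then $\beta_\varepsilon:= c_1(L,h_L^\varepsilon)+\varepsilon\omega_X\geq \tfrac{\varepsilon}{2}\omega_X>0$ is an honest K\"ahler metric, so in particular it is a smooth semi-positive representative of a semi-positive class, as required by Theorem~1.1.

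Next I would check the fiberwise hypothesis of Theorem~1.1 for the class $\{\beta_\varepsilon\}$. For generic $y\in Y$ the restriction $L|_{X_y}$ is nef (hypothesis of the corollary) and $K_{X_y}+L|_{X_y}$ is nef; hence
$$c_1(K_{X_y})+ \{\beta_\varepsilon\}|_{X_y}= \big(c_1(K_{X_y})+ c_1(L|_{X_y})\big)+ \varepsilon\{\omega_X\}|_{X_y}$$
is the sum of a nef class on the compact K\"ahler manifold $X_y$ and a K\"ahler class, hence is K\"ahler. Theorem~1.1 then produces a closed positive current $\Theta_\varepsilon$ in the class $c_1(K_{X/Y})+\{\beta_\varepsilon\}$; that is, the class
$$c_1(K_{X/Y})+ c_1(L)+ \varepsilon\{\omega_X\}$$
is pseudo-effective for every $\varepsilon>0$. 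Since the pseudo-effective cone $\Psef(X)\subset H^{1,1}(X,\bR)$ is closed, letting $\varepsilon\to 0$ gives that $c_1(K_{X/Y})+c_1(L)= c_1(K_{X/Y}+L)$ is pseudo-effective, which is the assertion.

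I expect the only genuine subtlety to be the verification that $c_1(K_{X_y})+ c_1(L|_{X_y})+\varepsilon\{\omega_X|_{X_y}\}$ is actually \emph{K\"ahler} and not merely nef: here one invokes the fact that on a compact K\"ahler manifold the sum of a nef $(1,1)$-class and a K\"ahler class is K\"ahler, which is immediate from Definition~1.3 by choosing a representative of the nef class with negative part controlled by a small multiple of $\omega_X|_{X_y}$ (and it is harmless that $\omega_X|_{X_y}$ may not be the metric secretly used in the definition of nefness on $X_y$, since all K\"ahler metrics on a compact manifold are mutually comparable). One should also remark that "$L|_{X_y}$ nef" is not literally listed among the hypotheses but follows from "$K_{X_y}+L|_{X_y}$ nef together with $L$ nef on $X$" only when convenient; in fact, strictly speaking we only need $K_{X_y}+L|_{X_y}$ nef for the above argument, since $\{\beta_\varepsilon\}|_{X_y}= c_1(L|_{X_y})+\varepsilon\{\omega_X|_{X_y}\}$ and what enters Theorem~1.1 is the K\"ahlerness of $c_1(K_{X_y})+\{\beta_\varepsilon\}|_{X_y}$, which is exactly $\big(c_1(K_{X_y})+c_1(L|_{X_y})\big)+\varepsilon\{\omega_X|_{X_y}\}$. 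Everything else is a routine limiting argument identical in spirit to the proof of Corollary~2.2.
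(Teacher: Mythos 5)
Your argument is precisely the one the paper uses: it takes $\{\beta_\varepsilon\}:= c_1(L)+\varepsilon\{\omega\}$, observes this class is K\"ahler (hence semi-positive) because $L$ is nef, checks that $c_1(K_{X_y})+\{\beta_\varepsilon\}|_{X_y}$ is K\"ahler by the fiberwise nefness of $K_{X_y}+L|_{X_y}$, applies Theorem~1.1, and lets $\varepsilon\to 0$ using closedness of the pseudo-effective cone. Your write-up is correct and simply spells out the same steps the paper compresses into the remark that the corollary ``is a direct consequence of Theorem 1.1.''
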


\medskip
We recall that in the projective case, instead of the nefness of the adjoint bundle
$\displaystyle K_{X_y}+ L|_{X_y}$ we have assumed that this bundle has a 
non-trivial section. Also, even if in the statement of the preceding corollary 
there is no \emph{transcendental} class involved, its proof is using the full force of
Theorem 1.1: the class $\{\beta\}:= c_1(L)+ \varepsilon{\omega}$ is K\"ahler, for every
positive real $\varepsilon$. This being said, the Corollary 4.1 is a direct consequence of 
Theorem 1.1. \qed
\smallskip

In order to be able to use Corollary 4.1, we first consider a desingularization $\wh Y$ of the 
$\alpha_X(X)$. We denote by $\ol X$ the 
fibered product of $X$ with $\wh Y$ over the base $Y$. This variety 
may be singular, but its singular loci projects into an analytic set 
of $X$ whose co-dimension is at least 2. This is seen e.g. by
considering the rational map $X\dashrightarrow \wh Y$ obtained 
by composing the inverse of $\pi_Y$ with the 
Albanese map $\alpha_X$: this map is defined outside a set of co-dimension
at least 2, and the set $\ol X$ is non-singular at each point of the 
pre-image of this set. Next we invoke the desingularisation result of H. Hironaka
and infer the existence of a map $\wh X\to \ol X$
which is an isomorphism outside the singular set of $\ol X$. 
We note that this procedure does not use the fact that
$X$ is projective. Let $\pi_X: \wh X\to X$ be the map obtained by 
composing the desingularization of $\ol X$ with the natural 
map $\ol X\to X$;
the manifolds/maps constructed above have the next 
important properties

\medskip  

\begin{enumerate}

\item[(i)] \emph{The generic fiber of the map $p: \wh X\to \wh Y$
is disjoint from the support of the exceptional divisor associated to 
the map $\pi_X$ defined by the relation
$$K_{\wh X}= \pi_X^\star(K_X)+ E.$$ 
}

\smallskip

\item[(ii)] \emph{The divisor $E$ is $\pi_X$--contractible})



\end{enumerate}
\medskip

\noindent The rest of the proof of Theorem 1.4 is identical to the arguments invoked in the projective case; the hypothesis of Corollary 4.1 are verified, by the properties (i) and (ii) above.
\bigskip

\begin{rem}{\rm
In a forthcoming paper, we will investigate the singular version of Theorem 1.1; the precise statement can be easily guessed from the projective case
(\cite{janos2}, \cite{eckart2}), as follows. Let $\{\beta\}$ be a 
$(1,1)$-class on $X$, admitting a representative $\Theta= \beta+ \sqrt{-1}\ddbar f$
which is a closed positive current, such that 
$$\int_Xe^{-f}dV< \infty$$
(i.e. $(X, \Theta)$ is the analogue of a klt pair in algebraic geometry).  
If the class 
\begin{equation}\label{54}c_1(K_{X_y})+ \{\beta\}|_{X_y}\end{equation}
contains a	 K\"ahler current for each $y\in Y$ generic, then 
the class $\displaystyle c_1(K_{X/Y})+ \{\beta\}$ should contain a K\"ahler current.
In order to adapt the argument used in this note for the proof of such a result, it seems to us that there are serious technical 
difficulties to overcome. Nevertheless, 
if $\Theta$ is allowed to be singular only along a SNC divisor, and if the class
\eqref{54} is K\"ahler, then a slight generalization of the results in \cite{cgp}, \cite{jmr} 
are enough to conclude. However, it is highly desirable to prove the result in singular context, as the article by J. Koll\'ar \cite{janos2} shows it: one should  
allow base points for the positive representatives of the class
\eqref{54}. We refer to the work of O. Fujino \cite{fujino} for new results and an overview of related topics from the 
algebraic geometry point of view.
}
\end{rem}



\end{document}